\newtheorem{anyprop}{Anyprop}[section]
\newtheorem{theorem}[anyprop]{Theorem}
\theoremstyle{definition}
\newtheorem{remark}[anyprop]{Remark}
\theoremstyle{remark}
\numberwithin{equation}{section}
\begin{document}
\title[NULLSTELLENSATZ FOR ARBITRARY FIELDS]
{A GENERAL VERSION OF THE NULLSTELLENSATZ FOR ARBITRARY FIELDS}


\author[Juan D. Velez]{Juan D. V\'elez}
\author[Danny de Jes\'us G\'omez-Ram\'irez]{Danny de Jes\'us G\'omez-Ram\'irez}
\author[Edisson Gallego]{Edisson Gallego}
\address{Universidad Nacional de Colombia, Escuela de Matem\'aticas,
Calle 59A No 63 - 20, N\'ucleo El Volador, Medell\'in, Colombia.}
\address{Vienna University of Technology, Institute of Discrete Mathematics and Geometry,
wiedner Hauptstaße 8-10, 1040, Vienna, Austria.}
\address{University of Antioquia, Calle 67 \# 53-108, Medell\'in, Colombia}
\email{jdvelez@unal.edu.co}
\email{daj.gomezramirez@gmail.com}
\email{egalleg@gmail.com}



\begin{abstract}
We prove a general version of Bezout's form of the Nullstellensatz for arbitrary fields. 
The corresponding sufficient and necessary condition only involves the local existence of multi-valued 
roots for each of the polynomials belonging to the ideal in consideration. Finally, this version implies
the standard Nullstellensatz when the coefficient field is algebraically closed.

\end{abstract}

\maketitle

\noindent Mathematical Subject Classification (2010): 14A10, 13A15

\smallskip

\noindent Keywords: Nullstellensatz, arbitrary fields

\section*{Introduction}

A fundamental result in algebraic geometry is the well-known Hilbert's Nullstellensatz, which describes the interrelations between an ideal $I$ in a polynomial ring over an algebraically closed field $k$ and the corresponding ideal of all polynomials vanishing on the zeroes of $I$, i.e. $I(V(I))=Rad(I)$ \cite[Ch. 1]{hartshornealgebraic}. Besides, the Nullstellensatz has also a `weak' form (or Bezout version) which states that under the former hypothesis an ideal $I$ in $k[x_1,\cdots, x_n]$ contains $1$ if and only if there is no common zero for all the polynomials of $I$ in $k^n$. In addition, the standard version of the Nullstellensatz can be easily deduced from the weak form using the Rabinowitsch's trick \cite{rabinowitsch}.

In the literature one can find several kinds of generalizations of both forms of this seminal result, as for example, a noncommutative version due to S. A. Amitsur \cite{amitsur}; the work of W. D. Brownawell describing a corresponding ``pure power product version'', which relates in a sophisticated way the exponents emerging from the `radical' condition stated in `homogeneous' forms of the Nullstellensatz \cite{brownawell}. In addition, the main result of T. Krick et al. in \cite{krick} offers sharp bounds for the degree and the height of the polynomials involved in the arithmetic (weak) form of the Nullstellensatz, and the work of L. Ein and R. Lazarsfeld proves more sophisticated geometric versions of it involving ideal sheaves, among others \cite{ein}. Finally, from the Artin-Tate lemma it can be derived a more general form of the Nullstellensatz for arbitrary fields, i.e., the quotient of a polynomial ring in finitely many variables over a field $L$ by a maximal ideal $m$, is a finite field extension of $L$. This is an elementary consequence of the Artin-Tate lemma and the Steinitz theorem (see for example \cite{wnull}). 

Now, let us assume that $k$ is an arbitrary ideal. Then, what will be the natural condition for $I$ characterizing the fact that $V(I)$ is non-empty?

Surprisingly, none of the results above offers an answer to this elementary question, whose answer can be considered genuinely as a formal generalization of the (weak) Nullstellensatz for arbitrary fields. 

So, in this short communication we prove in a completely elementary way that the global non-emptyness of the zero-locus of an ideal of polynomials over any field is equivalent to the local non-emptyness of the zero-locus of any of its elements, which would seem to be, in general, a strictly weaker condition. This result would offer a natural extension of a general condition characterizing the global non-emptyness of the zero-locus of ideals independently of the coefficient field $k$.

\section{Main Result}

\begin{theorem}[Bezout's Form of the Nullstellensatz for arbitrary fields]
Let $k$ be a field (not necessarily closed) and $I\subset k[X_{1},\ldots
,X_{n}]$ an ideal. Then for each polynomial $f(X_{1},...,X_{n})\in I$,
there exists $a=(a_{1},...,a_{n})\in k^{n}$ such that $f(a)=0$, if and only if the
algebraic set determined by $I,$ $V(I)$, is non-empty.
\end{theorem}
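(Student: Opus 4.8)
The plan is to prove the non‑trivial implication by contraposition — assuming $V(I)=\emptyset$, produce a single $f\in I$ vanishing nowhere on $k^n$ — since the converse is immediate (a point of $V(I)$ is a zero of every element of $I$). First I would dispose of the case $I=k[X_1,\ldots,X_n]$ (take $f=1$) and, using that $k[X_1,\ldots,X_n]$ is Noetherian, reduce at once to finitely generated ideals. The next reduction is to the case where $I=\mathfrak p$ is prime: taking a primary decomposition $I=\mathfrak q_1\cap\cdots\cap\mathfrak q_m$ with radicals $\mathfrak p_j$, the hypothesis forces $V(\mathfrak p_j)=\emptyset$ for every $j$; if one finds $g_j\in\mathfrak p_j$ with no zero on $k^n$, then (by Noetherianity) a high power $g_j^{N_j}$ lies in $\mathfrak q_j$, still has no zero, and the product $g_1^{N_1}\cdots g_m^{N_m}$ lies in $I$ and again has no zero. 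So the task becomes: given a prime $\mathfrak p$ such that $B:=k[X_1,\ldots,X_n]/\mathfrak p$ is a finitely generated $k$-domain with no $k$-rational point, find $g\in\mathfrak p$ vanishing nowhere on $k^n$. Here I would invoke the general Nullstellensatz to know that every residue field of $B$ at a maximal ideal is a finite — and, in the present situation, proper — extension of $k$.

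When $k$ is finite I would finish immediately: the maximal ideals of $k[X_1,\ldots,X_n]$ with residue field $k$ are the finitely many $\mathfrak n_a=(X_1-a_1,\ldots,X_n-a_n)$, $a\in k^n$, none contains $\mathfrak p$, and prime avoidance yields $g\in\mathfrak p$ lying outside all of them, i.e.\ $g(a)\neq 0$ for all $a$ (equivalently, one may paste local solutions together using indicator polynomials of the points of $k^n$). So assume $k$ infinite. If $\dim B=0$ then $B$ is a field $L$ with $k\subsetneq L=k(x_1,\ldots,x_n)$, $x_i$ being the class of $X_i$; I would pick $i$ with $x_i\notin k$, let $p_i\in k[T]$ be its minimal polynomial (irreducible of degree $\ge 2$, hence with no root in $k$), and take $g=p_i(X_i)\in\mathfrak p$.

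The remaining case $\dim B=d\ge 1$ I would handle by induction on $d$. After a $k$-linear change of variables (harmless, as it preserves the property of vanishing nowhere on $k^n$), Noether normalization makes $B$ module-finite over a polynomial subring $C=k[X_1,\ldots,X_d]$; inverting a single $0\neq s\in C$ and performing one further linear change, one may assume $B_s=C_s[\theta]$ with $\theta$ integral over $C$ of monic minimal polynomial $P(T)\in C[T]$ of degree $m$ (its coefficients lie in $C$ because $C$ is normal; in positive characteristic the inseparable part is dealt with separately). Here $m\ge 2$, for $m=1$ would present $B$ as a finitely generated $k$-subalgebra of $k(X_1,\ldots,X_d)$, and then any point of $k^d$ avoiding the relevant denominators would yield a $k$-point of $B$. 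Writing $\theta=\Theta(X_1,\ldots,X_n)$ and setting $Q:=P\bigl(\Theta(X_1,\ldots,X_n)\bigr)$ (a polynomial, since the coefficients of $P$ lie in $C$), one has $Q\in\mathfrak p$; and for every $a\in k^n$ with $s(a_1,\ldots,a_d)\neq 0$ the fibre of $B$ over the corresponding $k$-point of $C$ is $k[T]/(P_a(T))$, which has no $k$-point, so the specialization $P_a$ has no root in $k$, whence $Q(a)=P_a(\Theta(a))\neq 0$. Thus $Q$ vanishes nowhere on $k^n$ except possibly on the proper hypersurface $\{s=0\}$. To remove those zeros I would observe that $s$ is a nonzerodivisor on $B$, so $\dim k[X_1,\ldots,X_n]/(\mathfrak p+(s))=d-1$ while $V(\mathfrak p+(s))=\emptyset$; the inductive hypothesis (after one more primary decomposition) then gives $g'\in\mathfrak p+(s)$ vanishing nowhere on $k^n$, and writing $g'=g_0+sv$ with $g_0\in\mathfrak p$ produces $g_0\in\mathfrak p$ vanishing nowhere on $\{s=0\}$. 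The last step is to merge $Q$ and $g_0$ into one element of $\mathfrak p$ with empty zero set on $k^n$, and this is the step I expect to be the main obstacle: over a finite field it is a triviality, but over an infinite field one has only a zero-free polynomial on each of the complementary pieces $\{s\neq 0\}$ and $\{s=0\}$, so one must genuinely exploit that the two ``bad'' loci are separated by the single polynomial $s$ (together with the dimension bookkeeping) — this amounts to proving the statement for an ideal with two generators having no common zero, which is really the heart of the matter.
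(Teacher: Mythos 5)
Your reduction machinery is sound as far as it goes, but the place where you stop --- ``merge $Q$ and $g_0$ into one element of $\mathfrak p$ with empty zero set on $k^n$'' --- is not a technical loose end to be tidied; it \emph{is} the theorem. You have correctly observed that this last step amounts to: given two polynomials with no common $k$-zero, produce a single element of the ideal they generate having no $k$-zero at all. But the entire statement reduces to exactly that two-polynomial lemma by a one-line induction over a finite generating set (combine $f_1$ and $f_2$, then combine the result with $f_3$, and so on), so your Noether normalization, primary decomposition, Zariski's lemma, prime avoidance and induction on dimension are an elaborate detour that ends precisely where the problem begins. The proposal therefore has a genuine gap at its final, decisive step, and the ``extra structure'' you hope to exploit (the two bad loci being $\{s=0\}$ and $\{s\neq 0\}$) does not obviously help: naive combinations such as $g_0+sQ$ or $g_0+svQ$ can still vanish on $\{s\neq 0\}$.

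The paper attacks the key lemma head-on and needs none of the above apparatus. If $k$ is algebraically closed the statement is immediate from the classical weak Nullstellensatz (since $V(I)=\emptyset$ forces $1\in I$, and $1$ has no zeros). If $k$ is not algebraically closed, pick any monic $l(T)=T^{m}+a_{1}T^{m-1}+\cdots+a_{m}\in k[T]$ with no root in $k$ (e.g.\ any irreducible polynomial of degree $\geq 2$), and for $f_1,f_2\in I$ set
\[
P(f_1,f_2)\;=\;f_1^{m}+a_{1}f_2f_1^{m-1}+\cdots+a_{m}f_2^{m}\;\in\;(f_1,f_2)\subset I.
\]
If $f_2(a)\neq 0$ then $P(f_1,f_2)(a)=f_2(a)^{m}\,l\!\left(f_1(a)/f_2(a)\right)\neq 0$; if $f_2(a)=0$ then $P(f_1,f_2)(a)=f_1(a)^{m}$, which vanishes iff $f_1(a)=0$. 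Hence $P(f_1,f_2)(a)=0$ exactly when $f_1(a)=f_2(a)=0$, and iterating over generators of $I$ completes the proof in a few lines. (The special case $l(T)=T^2+1$ over $\RR$ is the familiar trick $f_1^2+f_2^2$; the construction above is its direct analogue over any non--algebraically closed field.) This is precisely the ingredient your argument is missing, and once you have it, everything preceding it in your proposal becomes unnecessary.
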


\begin{proof}

Case I) $k$ is algebraically closed: this corresponds to the classic version
of the (weak) Nullstellensatz.

Case II) $k$ is not algebraically closed. In this case we state that given
polynomials $f_{1}$ and $f_{2}\in I$, there is another polynomial $%
p(f_{1},f_{2})\in I$ such that $p(f_{1},f_{2})(a)=0\Leftrightarrow
f_{1}(a)=f_{2}(a)=0.$

To see this, let $l(T)=T^{n}+a_{1}T^{n-1}+\cdots +a_{n}\in k[T]$ be any
monic non constant polynomial without roots in $k$. We define

\[
P(f_{1},f_{2})(X) =f_{2}^{n}(X)\left( \left( \frac{f_{1}(X)}{f_{2}(X)}%
\right) ^{n}+a_{1}\left( \frac{f_{1}(X)}{f_{2}(X)}\right) ^{n-1}+\cdots
+a_{n}\right) \]

\[=f_{1}^{n}+a_{1}f_{2}f_{1}^{n-1}+\cdots +a_{k}f_{1}^{n-k}f_{2}^{k}+\cdots
+a_{n}f_{2}^{n}\]

It is clear that $P(f_{1},f_{2})\in I$. Let us prove that $%
P(f_{1},f_{2})(a)=0\Leftrightarrow f_{1}(a)=f_{2}(a)=0.$ Suppose $%
P(f_{1},f_{2})(a)=0$. Then $f_{2}(a)\not=0$, since otherwise $%
f_{1}(a)/f_{2}(a)$ would be a root of $l(T)$. Hence, $f_{2}(a)=0,$ and since 
$P(f_{1},f_{2})(a)=0$, it follows that $f_{1}^{n}(a)=f_{1}(a)=0.$

The reciprocal is clear, since $f_{1}(a)=f_{2}(a)=0$ implies $%
p(f_{1},f_{2})(a)=0.$

Finally, let $f_{1},...,f_{n}$ be arbitrary generators of $I.$ We
inductively define $p_{1}=p(f_{1},f_{2}),...,p_{r}=p(f_{r},p_{r-1})$.
Clearly $p_{r}(a)=0$ if and only if $f_{1}(a)=\cdots =f_{n}(a)=0$. Since $%
p_{n}\in I$, the hypothesis guarantees the existence of $a\in k^{n}$ such
that $p_{n}(a)=0$. Thus, $f_{1}(a)=\cdots =f_{n}(a)=0,$ whereas it follows clearly that 
$g(a)=0,$ for all $g\in I$.
\end{proof}

\begin{remark}
If $k$ is an algebraically closed field, the hypothesis of the theorem are
satisfied under the standard assumption that $I\neq R$ and consequently it
generalizes the classic (weak) Nullstellensatz. Effectively, if $f$ is a non-constant 
polynomial in $I$, let us see that the zero-locus of $f$ should be non-empty. 
So, after a standard change of coordinates it is possible to write $f$ as a
monic polynomial in one of the variables, let's say $X_{n}.$ 
Hence we may assume that $f(X_{1},...,X_{n})$ can be written
in the form%
\[
X_{n}^{r}+a_{1}(X_{1},...,X_{n-1})X_{n}^{r-1}+...+a_{r}(X_{1},...,X_{n-1}).
\]

Substituing $X_{1}=X_{2}=\cdots =X_{n-1}=0$ we obtain the polynomial $%
X_{n}^{r}+a_{1}(0)X_{n}^{r-1}+\cdots +a_{r}(0).$ Since we are assuming $k$
is algebraically closed, it has a zero $c\in k.$ Thus, in these new
coordinates $f$ has a cero $(0,...0,c)\in k^{n}.$
\end{remark}

\section*{Acknowledgements}
The authors would like to thank to the Universidad Nacional Colombia in Medell\'in and to the University of Osnabr\"uck for all the support. Danny Arlen de Jes\'us G\'omez-Ram\'irez was supported by the Vienna Science and Technology Fund (WWTF) as part of the Vienna Research Group 12-004. Finally, Danny A. J. G\'omez-Ram\'irez would like to thank Felipe  and Carlos Arroyave, Lucila Mendez and Y. G\'omez for their happiness and support.
\bibliographystyle{amsplain}

\end{document}